\newtheorem{theo}{Theorem}[section]
\newtheorem{prop}[theo]{Proposition}
\newtheorem{lemm}[theo]{Lemma}
\newtheorem{cor}[theo]{Corollary}
\theoremstyle{definition}
\newtheorem{remark}[theo]{Remark}
\newcommand{\Z}{\mathbb{Z}}
\newcommand{\R}{\mathbb{R}}
\title{$2$-knots with the same knot group but different knot quandles}
\author{Kokoro Tanaka}
\address{\scriptsize Department of Mathematics, Tokyo Gakugei University, 
4-1-1, Nukuikita, Koganei, Tokyo 184-8501, Japan}
\email{kotanaka@u-gakugei.ac.jp}
\author{Yuta Taniguchi}
\address{\scriptsize Department of Mathematics, Graduate School of Science, Osaka University, 
1-1, Machikaneyama, Toyonaka, Osaka, 560-0043, Japan} 
\email{yuta.taniguchi.math@gmail.com}
\date{}
\begin{document}
\keywords{}
\renewcommand{\subjclassname}{
\textup{2020} Mathematics Subject Classification}\subjclass[2020]{Primary 
57K12, 
57K10; 
Secondary 
57K45. 
}
%
\maketitle

\begin{abstract}
We give a first example of $2$-knots with the same knot group but different knot quandles 
by analyzing the knot quandles of 
twist spins. 
As a byproduct of the analysis, we also give a classification of all 
twist spins 
with finite knot quandles. 
\end{abstract}

\section{Introduction}
A $1$-knot is a circle embedded in the $3$-sphere $S^3$, 
a $2$-knot is a $2$-sphere embedded in the $4$-sphere $S^4$, 
and they are collectively referred to as knots in this paper. 
There are two related topological invariants of an oriented knot $\mathcal{K}$ defined 
by using information of the knot complement 
with a chosen fixed base point. 
One is the \textit{knot group} $G(\mathcal{K})$, 
the fundamental group of the complement,  
defined as the set of homotopy classes of loops,  
and the other is the \textit{knot quandle} $Q(\mathcal{K})$ 
\cite{Joyce1982quandle, Matveev1982distributive}
defined as the set of homotopy classes of paths 
with a suitable binary operation. 
Note that the definition of $G(\mathcal{K})$ does not require the orientation of 
$\mathcal{K}$, but the definition of $Q(\mathcal{K})$ does.
In this paper, we investigate difference between these two invariants. 

To make the problem concrete, we consider the following two conditions 
for oriented knots $\mathcal{K}, \mathcal{K}'$: 
\begin{enumerate}
\item[(a)] 
The knot groups $G(\mathcal{K})$ and $G(\mathcal{K}')$ are isomorphic to each other. 
\item[(b)]
The knot quandles $Q(\mathcal{K})$ and $Q(\mathcal{K}')$ are isomorphic to each other. 
\end{enumerate}
Since the associated group of the knot quandle is group isomorphic to the knot group, 
it is known that the condition (b) implies the condition (a). 
The converse does not hold in general for $1$-knots. 
For example, the square knot and the granny knot satisfy the condition (a) but 
do not satisfy the condition (b), that is, 
they are $1$-knots with the same knot group but different knot quandles. 
On the other hand, there are no known examples of 2-knots with the same knot group but different knot quandles as in Remark~\ref{rem:2-knot}. 
The purpose of this paper is to give a first example for such $2$-knots.  
More precisely, we prove in Theorem~\ref{theo:main} 
that there exist infinitely many triples of $2$-knots 
such that any two of each triples satisfy the condition (a) 
but do not satisfy the condition (b). 
The main ingredient of the proof is Theorem~\ref{theo:type_twist_spun}, which reveals 
an algebraic property, called the \textit{type}, of the knot quandle of a twist-spin.

\begin{remark}\label{rem:2-knot}
Researchers in this field may have believed that there were $2$-knots with the same knot group but different knot quandles, but they would not have known of any specific examples of such $2$-knots or proofs of their existence. 
We point out that based on the example of such $1$-knots 
we cannot construct an example of such $2$-knots in a straightforward way. 
Zeeman's twist-spinning construction \cite{Zeeman1965twisting} is a typical way to obtain a $2$-knot $\tau^n(k)$ from a $1$-knot $k$ (and an integer $n$). 
For the square knot $k$ and the granny knot $k'$, it is known that $\tau^0 (k)$ and $\tau^0(k')$ are equivalent \cite{Gordon1976note, Roseman1975spun}, 
and hence they satisfy the condition (b). 
Moreover it follows that $\tau^n (k)$ and $\tau^n(k')$ 
satisfy the condition (b) for any integer $n$, 
even if we do not know whether they are equivalent or not.\footnote{
We can check that $\tau^n (k)$ and $\tau^n(k')$ are not equivalent for 
$n = 2,3$ by using quandle cocycle invariants \cite{Carter2003quandle}. 
}
\end{remark}

\begin{remark}
Based on the example of such $1$-knots,  
we can easily obtain an example of such surface knots of positive genus 
in a straightforward way, where the surface knot is a closed surface embedded in $S^4$. 
Note that a surface knot of genus zero is nothing but a $2$-knot. 
For a classical knot $k$, the spun $k$ torus, 
which is a knotted torus in $S^4$ defined by Livingston \cite{Livingston1985stably},  
has the knot quandle isomorphic to $Q(k)$ and also has the knot group isomorphic to $G(k)$. 
Then, using the above example of $1$-knots, the granny knot and the square knot, 
we obtain two surface knots of genus one with the same knot group but different knot quandles. 
We can also obtain such surface knots of genus greater than one by stabilizing them. 
\end{remark}

This paper is organized as follows. 
In Section~\ref{sec:pre}, we review the basics of quandles including 
the type of a quandle, the knot quandle of an oriented knot and a generalized Alexander quandle.  
We compute the type of the knot quandle for a twist spin (Theorem~\ref{theo:type_twist_spun}) 
and prove our main result (Theorem~\ref{theo:main}) in Section~\ref{sec:type_twist_spun}. 
As an application of Theorem~\ref{theo:type_twist_spun}, 
we give a classification of all twist spins with finite knot quandles in Section~\ref{sec:finite}. 
Appendix~\ref{appendix:proof} is devoted to proving Proposition~\ref{prop:order_automorphism}, 
which is a key proposition for the proof of Theorem~\ref{theo:type_twist_spun}. 
Finally, in Appendix~\ref{appendix:BTS}, we compute the types of the knot quandles for 
a certain class of $2$-knots called branched twist spins including all twist spins.

\section{Preliminaries}
\label{sec:pre}
A {\it quandle} $X$ \cite{Joyce1982quandle,Matveev1982distributive} is a non-empty set with a binary operation $\ast$ that satisfies the following conditions:
\begin{itemize}
  \setlength{\parskip}{0cm} 
  \setlength{\itemsep}{0cm} 
\item For any $x\in X$, we have $x\ast x=x$.
\item For any $y\in X$, the map $S_y:X\to X;x\mapsto x\ast y$ is bijective.
\item For any $x,y,z\in X$, we have $(x\ast y)\ast z=(x\ast z)\ast(y\ast z)$.
\end{itemize} 
Let $X$ be a quandle. For any $x,y\in X$ and $n\in\Z$, we denote $S_y^n(x)$ by $x\ast^n y$. We set ${\rm type}(X):={\rm min}\{ n\in\Z_{>0}\mid x\ast^n y=x\ (\textrm{for any }x,y\in X)\}$, where ${\rm min}\ \emptyset=\infty$. We call ${\rm type}(X)$ the {\it type} of the quandle $X$.

The {\it associated group} of $X$ is the group generated by the all elements of $X$ subject to the relations $x\ast y=y^{-1}xy$ for all $x,y\in X$. We denote the associated group of $X$ by ${\rm As}(X)$.  The associated group of $X$ acts on $X$ from the right by $x\cdot y:=x\ast y$ for any $x,y\in X$. A quandle $X$ is {\it connected} if the action of ${\rm As}(X)$ on $X$ is transitive.

A map $f:X\to Y$ between quandles is a {\it $($quandle$)$ isomorphism} if $f(x\ast y)=f(x)\ast f(y)$ for any $x,y\in X$ and $f$ is a bijection. When there is a quandle isomorphism $f:X\to Y$, we say that $X$ and $Y$ are {\it $($quandle$)$ isomorphic}.

Let $\mathcal{K}$ be an oriented $n$-knot for $n=1,2$. Let $N(\mathcal{K})$ be a tubular neighborhood of $\mathcal{K}$ and $E(\mathcal{K})=S^{n+2}\backslash {\rm int}N(\mathcal{K})$ an exterior of $\mathcal{K}$. We fix a point $p\in E(\mathcal{K})$. Let $Q(\mathcal{K},p)$ be the set of homotopy classes of all paths in $E(\mathcal{K})$ from a point in $\partial E(\mathcal{K})$ to $p$. The set $Q(\mathcal{K},p)$ is a quandle with an operation defined by $[\alpha]\ast[\beta]:=[\alpha\cdot\beta^{-1}\cdot m_{\beta(0)}\cdot \beta]$, where $m_{\beta(0)}$ is a meridian loop starting from $\beta(0)$ and going along in the positive direction. We call $Q(\mathcal{K},p)$ the {\it knot quandle} of $\mathcal{K}$, 
which is known to be connected. 
The isomorphism class of the knot quandle does not depend on the base point $p$. Thus, we denote the knot quandle simply by $Q(\mathcal{K})$. We note that for an oriented knot $\mathcal{K}$, the associated group ${\rm As}(Q(\mathcal{K}))$ is group isomorphic to the knot group $G(\mathcal{K}):=\pi_1(E(\mathcal{K}))$.


Let $G$ be a group and $f:G\to G$ a group automorphism. We define the operation $\ast$ on $G$ by $x\ast y:=f(xy^{-1})y$. Then, ${\rm GAlex}(G,f)=(G,\ast)$ is a quandle, which is called the {\it generalized Alexander quandle}. 
Although it is difficult to compute the type of a quandle in general, 
the type of a generalized Alexander quandle can be computed as follows,  
which will be used in what follows.

\begin{prop}
\label{prop:type_generalized}
We have ${\rm type}({\rm GAlex}(G,f)) = {\rm order}(f)$. 
\end{prop}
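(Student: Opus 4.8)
The plan is to reduce the statement to a direct computation of the iterated right translation $S_y^n$ in $\mathrm{GAlex}(G,f)$. First I would establish, by induction on $n$, the closed formula
\[
x \ast^n y = f^n(xy^{-1})\,y \qquad \text{for all } x,y \in G \text{ and all } n \in \Z_{>0}.
\]
The base case $n=1$ is just the defining relation $x \ast y = f(xy^{-1})y$. For the inductive step, setting $w := f^n(xy^{-1})\,y = x \ast^n y$, one computes $wy^{-1} = f^n(xy^{-1})$, so that
\[
x \ast^{n+1} y = w \ast y = f(wy^{-1})\,y = f\bigl(f^n(xy^{-1})\bigr)\,y = f^{n+1}(xy^{-1})\,y,
\]
using that $f$ is a group homomorphism.

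With this formula in hand, the relation $x \ast^n y = x$ becomes $f^n(xy^{-1})\,y = x$, which after cancelling $y$ on the right is equivalent to $f^n(xy^{-1}) = xy^{-1}$. The key observation is then that, as $x$ and $y$ range over $G$, the element $u := xy^{-1}$ ranges over all of $G$: indeed, for any fixed $y$ the map $x \mapsto xy^{-1}$ is a bijection of $G$. Hence demanding that $x \ast^n y = x$ hold for \emph{every} pair $x,y \in G$ is exactly the same as demanding that $f^n(u) = u$ for every $u \in G$, that is, $f^n = \mathrm{id}_G$.

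Finally I would conclude by unwinding the definition of type:
\[
\mathrm{type}(\mathrm{GAlex}(G,f)) = \min\{\, n \in \Z_{>0} \mid f^n = \mathrm{id}_G \,\} = \mathrm{order}(f),
\]
where both sides are understood to be $\infty$ precisely when no positive power of $f$ equals the identity, consistent with the convention $\min \emptyset = \infty$. I do not expect any genuine obstacle in this argument; the only points that require a little care are the bookkeeping in the inductive computation of $x \ast^n y$ and the remark that $u = xy^{-1}$ sweeps out all of $G$, since it is precisely this surjectivity that upgrades the pointwise quandle identity into the global statement $f^n = \mathrm{id}_G$.
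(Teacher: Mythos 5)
Your proposal is correct and follows essentially the same route as the paper: both hinge on the closed formula $x \ast^n y = f^n(xy^{-1})\,y$, proved by the same inductive computation. The only difference is bookkeeping at the end — the paper derives the two inequalities $\mathrm{type} \geq \mathrm{order}(f)$ (by specializing to $y = e$) and $\mathrm{type} \leq \mathrm{order}(f)$ separately, whereas you observe directly that the defining sets of the two minima coincide, which is a marginally cleaner way to phrase the same argument.
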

\begin{proof}
Since the definition of ${\rm GAlex}(G,f)$, we have 
\[
(f^i(xy^{-1})y)\ast y=f((f^i(xy^{-1})y)y^{-1})y=f(f^i(xy^{-1}))y=f^{i+1}(xy^{-1})y
\]
 for any $x,y\in G$ and $i\in \Z_{\ge 0}$. Thus, we see that $x\ast^iy=f^i(xy^{-1})y$ for any $x,y\in G$ and $i\in\Z_{\ge 0}$. 
 
Let $j$ be a positive integer. For any $x\in G$, $x\ast^j e$ is equal to $f^j(x)$, where $e$ is the identity element of $G$. Thus, if $j<{\rm order}(f)$, there exists an element $y$ such that $y\ast^j e$ is not equal to $y$. This implies that ${\rm type}({\rm GAlex}(G,f))\geq{\rm order}(f)$. In particular, if ${\rm order}(f)=\infty$, then we have ${\rm GAlex}(G,f)=\infty$. Hence, we assume that the order of $f$ is finite.
 
 For any $x,y\in G$, we have 
 \[
 x\ast^{{\rm order}(f)}y=f^{{\rm order}(f)}(xy^{-1})y=xy^{-1}y=x,
 \]
  which implies that ${\rm type}({\rm GAlex}(G,f))\leq{\rm order}(f)$. By the above discussion, we see that ${\rm order}(f)={\rm type}({\rm GAlex}(G,f))$.
\end{proof}

\section{Knot quandles of twist spins}
\label{sec:type_twist_spun}
In this section, we discuss the knot quandle of twist spins. 
The purpose of this section is to show the following theorem:
\begin{theo}
\label{theo:main}
There exists a triple $\{ F_1,F_2,F_3\}$ of 2-knots such that
\begin{itemize}
\item[$(1)$] the groups 
$G(F_1),G(F_2)$ and $G(F_3)$ are mutually isomorphic, but 
\item[$(2)$] no two of the quandles 
$Q(F_1),Q(F_2)$ and $Q(F_2)$ are isomorphic.
\end{itemize} 
Moreover, there exist infinitely many such triples. 
\end{theo}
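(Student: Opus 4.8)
The plan is to realise the triples by twist-spinning torus knots and to exploit the permutation symmetry of Brieskorn manifolds. Fix pairwise coprime integers $2\le a_1<a_2<a_3$, write $T(p,q)$ for the $(p,q)$-torus knot (pairwise coprimality makes each $T(a_i,a_j)$ a genuine nontrivial knot), and set
\[
F_1=\tau^{a_3}(T(a_1,a_2)),\quad F_2=\tau^{a_1}(T(a_2,a_3)),\quad F_3=\tau^{a_2}(T(a_1,a_3)).
\]
By Zeeman's theorem \cite{Zeeman1965twisting} the exterior of $\tau^n(k)$ fibres over $S^1$ with fibre the $n$-fold cyclic branched cover $\Sigma_n(k)$ of $S^3$ along $k$ (with an open ball removed, so the fibre has the same fundamental group) and monodromy the covering transformation $h$ of order $n$; hence $G(\tau^n(k))\cong\pi_1(\Sigma_n(k))\rtimes_{h_\ast}\Z$. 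For a torus knot $\Sigma_n(T(p,q))$ is the Brieskorn homology sphere $\Sigma(p,q,n)$, and pairwise coprimality guarantees that $\Pi:=\pi_1(\Sigma(p,q,n))$ is perfect. Since $\Sigma(p,q,n)$ is unchanged under permuting its three parameters, the fibres of $F_1,F_2,F_3$ are one and the same manifold $\Sigma:=\Sigma(a_1,a_2,a_3)$.

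To establish condition $(1)$ I would show that $h_\ast$ is an \emph{inner} automorphism of $\Pi$, namely conjugation by an element $g$ whose image in $\Pi/Z(\Pi)$ is the cone-point generator of order equal to the twist number. Granting this, put $s:=g^{-1}t$, where $t$ is the stable letter of the mapping torus; then $s\pi s^{-1}=g^{-1}(t\pi t^{-1})g=g^{-1}(g\pi g^{-1})g=\pi$ for all $\pi\in\Pi$, so $s$ centralises $\Pi$, and since $t=gs$ we get $G(\tau^n(T(p,q)))\cong\Pi\times\langle s\rangle\cong\Pi\times\Z$. Because the three fibres coincide, this gives $G(F_1)\cong G(F_2)\cong G(F_3)\cong\Pi\times\Z$, proving $(1)$. (Perfectness of $\Pi$ is exactly what makes ${\rm type}$-free the requirement that a knot group abelianise to $\Z$: here $(\Pi\times\Z)^{\mathrm{ab}}=\Z$.)

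For condition $(2)$ I would read the types off Theorem~\ref{theo:type_twist_spun}. The meridian of $\tau^n(k)$ realises conjugation by $h_\ast$, so ${\rm type}(Q(\tau^n(k)))$ is the order of $h_\ast$ in ${\rm Aut}(\Pi)$; by the inner description above (in the spirit of Proposition~\ref{prop:type_generalized}) this order equals that of the cone-point generator, i.e. the twist number $n$. Thus ${\rm type}(Q(F_1))=a_3$, ${\rm type}(Q(F_2))=a_1$, ${\rm type}(Q(F_3))=a_2$, which are pairwise distinct. Since the type is a quandle-isomorphism invariant, no two of $Q(F_1),Q(F_2),Q(F_3)$ are isomorphic. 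Finally, letting $(a_1,a_2,a_3)=(2,3,r)$ range over all $r\ge 5$ coprime to $6$ yields infinitely many such triples, completing Theorem~\ref{theo:main}.

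The main obstacle is the inner-automorphism claim underlying $(1)$: that the branched-covering transformation induces on $\Pi$ conjugation by an element projecting to the order-$n$ cone generator. This is precisely where pairwise coprimality is essential. It forces $\Pi$ to be perfect, removing the homological obstruction to $h_\ast$ being inner; by contrast, for the $2$-twist-spun trefoil the parameters $(2,3,2)$ are not coprime, $h_\ast$ acts as $-1$ on $H_1=\Z/3$, and $h_\ast$ is genuinely outer. I would prove the claim from the Seifert-fibred structure of $\Sigma(p,q,n)$: present $\Pi$ as a central extension of the von Dyck group $\Delta(p,q,n)$ and identify $h_\ast$ with conjugation by a lift of the relevant order-$n$ generator, so that the group isomorphism of $(1)$ and the type computation of $(2)$ both flow from the same description.
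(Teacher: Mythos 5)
Your proposal is correct and, in its skeleton, coincides with the paper's own proof: the paper also takes pairwise coprime $p,q,r$, sets $F_1=\tau^p(t_{q,r})$, $F_2=\tau^q(t_{r,p})$, $F_3=\tau^r(t_{p,q})$, identifies the three fibres as a single Brieskorn sphere \cite{Milnor1975Brieskorn}, proves condition~(2) exactly as you do (Theorem~\ref{theo:type_twist_spun} gives types $p,q,r$, which are pairwise distinct quandle invariants), and gets infinitude by varying the triple. The one place you genuinely diverge is condition~(1): the paper disposes of it by citing Gordon \cite{Gordon1972twist} for $G(\tau^r(t_{p,q}))\cong\pi_1(M^r_{t_{p,q}})\times\Z$, whereas you propose to re-prove this by showing that the monodromy automorphism $h_\ast$ is inner. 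Your algebraic deduction from inner-ness is right (if $h_\ast$ is conjugation by $g$, then $s=g^{-1}t$ centralizes $\Pi$ and $\Pi\rtimes_{h_\ast}\Z\cong\Pi\times\Z$), and the inner-automorphism claim is true --- it is the heart of Gordon's theorem --- but, as you yourself flag, you only sketch its proof, and the von Dyck central-extension outline still needs an actual argument identifying $h_\ast$ with conjugation by a lift of the cone generator. The quickest way to close that gap is geometric rather than algebraic: for pairwise coprime $p,q,n$, the covering transformation $(z_1,z_2,z_3)\mapsto(z_1,z_2,e^{2\pi i/n}z_3)$ of $\Sigma(p,q,n)=\{z_1^p+z_2^q+z_3^n=0\}\cap S^5$ lies inside the Seifert circle action $w\cdot(z_1,z_2,z_3)=(w^{qn}z_1,w^{pn}z_2,w^{pq}z_3)$ (take $w=e^{2\pi i k/(pqn)}$ with $k\equiv 0\pmod{pq}$ and $k\equiv 1\pmod{n}$, solvable by the Chinese remainder theorem precisely because $\gcd(pq,n)=1$), so $h$ is isotopic to the identity and therefore induces an inner automorphism of $\pi_1$. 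This is exactly where pairwise coprimality enters, consistent with your $(2,3,2)$ counterexample, where $h_\ast$ acts as $-1$ on $H_1=\Z/3$ and is outer. So you should either finish that lemma along these lines or simply cite \cite{Gordon1972twist} as the paper does; with the citation, your proof and the paper's are the same. What your longer route buys is a self-contained explanation of why the coprimality hypothesis cannot be dropped, which the paper leaves implicit in its references.
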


Let $k$ be an oriented 1-knot in $S^3$. For an integer $n$, Zeeman \cite{Zeeman1965twisting} defined the $2$-knot $\tau^n(k)$, which is called the {\it $n$-twist spun $k$}. 
It was shown 
that the $n$-twist spun $k$ is a fibered 2-knot whose fiber is the once punctured $M^{n}_k$ for $n > 0$, where $M^{n}_k$ is the $n$-fold cyclic branched covering space of $S^3$ branched along $k$. 
In particular, the $1$-twist spun $k$ is trivial for any oriented 1-knot $k$, 
and the $n$-twist spin of the trivial $1$-knot is also trivial for any $n>0$. 
Litherland \cite{Litherland1985symmetries} showed that, for any oriented 1-knot $k$ and any integer $n$, the $(-n)$-twist spun knot $\tau^{-n}(k)$ is equivalent to $\tau^n(-k!)$, 
where $-k$ (resp.~$k!$) is the oriented $1$-knot obtained from $k$ 
by inverting the orientation (resp.~taking the mirror image) of $k$. 
Hence, we consider the case where $n>1$ and $k$ is non-trivial, 
since we are interested in fibered $2$-knots. 

Let $\varphi$ be the group automorphism of $\pi_1(M^n_k)$ induced by the monodromy of $S^4\backslash \tau^n(k)$. We note that the monodromy of $S^4\backslash \tau^n(k)$ coincides with the canonical generator of the transformation group of $M^n_k$. 
Then we have the following proposition, whose proof will be given in Appendix \ref{appendix:proof}: 

\begin{prop}
\label{prop:order_automorphism}
Let $\varphi$ be the group automorphism of $\pi_1(M^n_k)$ induced by the monodromy of $S^4\backslash \tau^n(k)$. Then the order of $\varphi$ is equal to $n$.
\end{prop}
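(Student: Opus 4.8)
The plan is to prove the two bounds $\mathrm{order}(\varphi)\mid n$ and $\mathrm{order}(\varphi)\nmid d$ for every proper divisor $d$ of $n$ separately. The upper bound is immediate: $\varphi$ is induced by the canonical generator $h$ of the covering transformation group $\mathrm{Deck}(M^n_k\to S^3)\cong\Z/n\Z$, and in the local model $z\mapsto e^{2\pi\sqrt{-1}/n}z$ near the branch locus the map $h$ fixes the branch circle $\tilde k$ pointwise; choosing a base point there makes $\varphi=h_\ast$ a genuine automorphism with $\varphi^n=(h^n)_\ast=\mathrm{id}$, so $\mathrm{order}(\varphi)\mid n$. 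It therefore remains to show that the induced homomorphism $\Z/n\Z\to\mathrm{Aut}(\pi_1(M^n_k))$ is injective, and for this it suffices to check that $\varphi^{n/p}\neq\mathrm{id}$ for each prime $p\mid n$.

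To do this I would argue by contradiction and appeal to the rigidity of finite-order diffeomorphisms. Suppose $\varphi^{n/p}=\mathrm{id}$ for some prime $p\mid n$. Then $h^{n/p}$ is a diffeomorphism of exact period $p$ that acts as the identity on $\pi_1(M^n_k)$, and its fixed-point set contains $\tilde k$, hence is nonempty. Since $k$ is nontrivial and $n>1$, the orbifold obtained from $(S^3,k)$ by imposing cone angle $2\pi/n$ along $k$ is irreducible and geometric by the orbifold geometrization theorem, so $M^n_k$ inherits a geometric structure and, by the geometrization of finite group actions, $h$ may be taken to be an isometry of finite order. (In bookkeeping terms, setting $\Gamma=G(k)/\langle\langle\mu^{n}\rangle\rangle$ one has $1\to\pi_1(M^n_k)\to\Gamma\xrightarrow{\beta}\Z/n\Z\to1$ with $\varphi^{d}$ equal to conjugation by $\mu^{d}$, so the hypothesis places the torsion element $\mu^{n/p}$, nontrivial because $\beta(\mu^{n/p})\neq 0$, in the centralizer of the finite-index subgroup $\pi_1(M^n_k)$.)

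The main obstacle is this geometric step, which I expect to split according to whether $M^n_k$ is aspherical. In the aspherical case, a diffeomorphism inducing the identity in $\mathrm{Aut}(\pi_1)$ is homotopic to the identity, so the isometry $h^{n/p}$ lies in the identity component $\mathrm{Isom}_0(M^n_k)$. This component is trivial in the hyperbolic and $\mathrm{Sol}$ cases, and is the circle or torus of the geometric fibration in the Seifert-type cases, where it acts freely by the Conner--Raymond/Seifert theorem on injective torus actions. Thus $h^{n/p}$ is either the identity or fixed-point-free, each contradicting that it is a nontrivial map fixing $\tilde k$ pointwise. When $k$ is composite and $M^n_k$ is a connected sum, I would first make $h$ preserve an equivariant sphere system and then run the same argument on the prime summands, using that the center of a nontrivial free product is trivial.

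The one case in which this rigidity genuinely fails is $\pi_1(M^n_k)$ finite, i.e.\ $M^n_k$ a spherical space form: these carry positive-dimensional isometry groups and admit nontrivial periodic isometries homotopic to the identity, so $\mathrm{Out}(\pi_1)$ cannot detect $\varphi$, and this is the delicate case. Here I would instead compute the action of $h$ on $H_1(M^n_k)$ directly from the Alexander module of $k$, where the $\Z/n\Z$-action is controlled by the values of the Alexander polynomial $\Delta_k$ at $n$th roots of unity, and verify that $h_\ast$ already has order $n$ on $H_1(M^n_k)$; since $H_1$ is a quotient of $\pi_1$, this forces $\mathrm{order}(\varphi)\geq n$. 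Combining the aspherical and the spherical cases then yields $\mathrm{order}(\varphi)=n$.
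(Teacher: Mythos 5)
Your reduction (order divides $n$; rule out $\varphi^{n/p}=\mathrm{id}$ for primes $p\mid n$) is sound, but both halves of your lower-bound argument have genuine gaps. In the aspherical Seifert case, the claim that $\mathrm{Isom}_0(M^n_k)$ ``acts freely by the Conner--Raymond/Seifert theorem on injective torus actions'' is false: injectivity of a circle action means the evaluation map $\pi_1(S^1)\to\pi_1(M)$ is injective, not that the action is free, and finite subgroups of the Seifert $S^1$-action fix the exceptional fibers pointwise. In fact the maps under study are themselves counterexamples to your dichotomy: for a torus knot, $M^n_{t_{p,q}}=\Sigma(p,q,n)$ is Seifert fibered, the branch circle $\tilde k$ is an exceptional fiber, and the deck transformations are precisely nontrivial finite-order elements of the circle action, homotopic to the identity, fixing $\tilde k$ pointwise. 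So no contradiction is reached; being homotopic to the identity only controls the class in $\mathrm{Out}(\pi_1)$, whereas the hypothesis $\varphi^{n/p}=\mathrm{id}$ lives in $\mathrm{Aut}(\pi_1)$ based at a fixed point. Closing this gap requires exactly the sharper fixed-point rigidity statement the paper isolates as Lemma~\ref{lemm:order_group_auto} (from Conner--Raymond and Plotnick): on a closed aspherical manifold, a finite group acting effectively with a fixed point injects into $\mathrm{Aut}(\pi_1)$ based at that point.

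The second gap is fatal where the proposition is genuinely delicate: the spherical case cannot be handled through $H_1$. The only composite-$n$ case with $\pi_1(M^n_k)$ finite is $k=t_{2,3}$, $n=4$; there $\pi_1(M^4_{t_{2,3}})$ is the binary tetrahedral group, $H_1(M^4_{t_{2,3}})\cong\Z/3$, and the deck transformation acts on $H_1$ by multiplication by $-1$ (compute $\Z[t]/(t^2-t+1,\,t^4-1)$: one gets $\Z/3$ with $t=-1$). Its order on $H_1$ is $2$, not $4$, so the statement you propose to ``verify'' is false, and homology cannot distinguish $\mathrm{order}(\varphi)=2$ from $4$. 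Similarly, in the Poincar\'e-sphere cases $M^5_{t_{2,3}}=M^3_{t_{2,5}}=M^2_{t_{3,5}}=\Sigma(2,3,5)$ one has $H_1=0$, so your method cannot even show $\varphi\neq\mathrm{id}$ there. The paper gets around both failures by non-homological arguments: $\mathrm{order}(\varphi)\neq 1$ follows from connectivity of the knot quandle together with the Smith conjecture, and the case $n=4$, $k=t_{2,3}$ is settled by showing that $\mathrm{order}(\varphi)=2$ would force $\mathrm{type}(Q(\tau^4(t_{2,3})))=2$, whence Tietze moves on the presentation $\langle a,b\mid (a\ast b)\ast a=b,\ a\ast^4 b=a\rangle$ would give $Q(\tau^4(t_{2,3}))\cong Q(\tau^2(t_{2,3}))$, contradicting $|Q(\tau^4(t_{2,3}))|=24\neq 3=|Q(\tau^2(t_{2,3}))|$. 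Without substitutes for these two steps your proof does not close.
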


Inoue \cite[Theorem~$3.1$]{Inoue2019fibered} showed that the knot quandle $Q(\tau^n(k))$ of $\tau^n(k)$ is quandle isomorphic to ${\rm GAlex}(\pi_1(M^n_k),\varphi)$. 
Then, we have: 

\begin{theo}
\label{theo:type_twist_spun}
The type of $Q(\tau^n(k))$ is equal to $n$.
\end{theo}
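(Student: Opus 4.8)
The plan is to deduce the theorem directly from the three ingredients assembled immediately above: Inoue's identification of $Q(\tau^n(k))$ with a generalized Alexander quandle, Proposition~\ref{prop:type_generalized} computing the type of such a quandle, and Proposition~\ref{prop:order_automorphism} computing the order of the relevant automorphism. The only structural fact that these do not literally supply is that the type is a quandle isomorphism invariant, so the first thing I would record is the elementary observation that isomorphic quandles have equal type. If $f\colon X\to Y$ is a quandle isomorphism, then by induction on $i$ one has $f(x\ast^i y)=f(x)\ast^i f(y)$ for all $x,y\in X$ and $i\in\Z_{\ge 0}$: the base case $i=1$ is the defining property of $f$, and the inductive step reads $f(x\ast^{i+1}y)=f((x\ast^i y)\ast y)=f(x\ast^i y)\ast f(y)=(f(x)\ast^i f(y))\ast f(y)=f(x)\ast^{i+1}f(y)$. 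Since $f$ is a bijection, the condition ``$x\ast^i y=x$ for all $x,y\in X$'' holds if and only if ``$u\ast^i v=u$ for all $u,v\in Y$,'' so the minimal such positive $i$ (or $\infty$) coincides for $X$ and $Y$; that is, ${\rm type}(X)={\rm type}(Y)$.

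With this in hand I would simply chain the three facts. By Inoue's theorem $Q(\tau^n(k))$ is quandle isomorphic to ${\rm GAlex}(\pi_1(M^n_k),\varphi)$, so by the invariance just established ${\rm type}(Q(\tau^n(k)))={\rm type}({\rm GAlex}(\pi_1(M^n_k),\varphi))$. Proposition~\ref{prop:type_generalized} then gives ${\rm type}({\rm GAlex}(\pi_1(M^n_k),\varphi))={\rm order}(\varphi)$, and Proposition~\ref{prop:order_automorphism} gives ${\rm order}(\varphi)=n$. Combining these equalities yields ${\rm type}(Q(\tau^n(k)))=n$, which is the assertion.

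The genuine content, and the only step that is not formal, is Proposition~\ref{prop:order_automorphism}, whose proof is deferred to Appendix~\ref{appendix:proof}; I expect that to be the main obstacle. The automorphism $\varphi$ is induced by the canonical generator $h$ of the transformation group of the $n$-fold cyclic branched cover $M^n_k$, a diffeomorphism of order exactly $n$, so $\varphi^n={\rm id}$ is immediate. The real difficulty is to show that no proper power $\varphi^d$ with $0<d<n$ and $d\mid n$ is the identity automorphism: although $h^d$ is geometrically nontrivial, one must rule out that it nevertheless acts trivially on $\pi_1(M^n_k)$. I anticipate that ruling this out requires genuine input about the fundamental group of the branched cover and about the induced action of the covering symmetry, rather than a purely formal argument, which is why this step is isolated in the appendix.
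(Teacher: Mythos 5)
Your proposal is correct and follows essentially the same route as the paper: the paper's proof is exactly the chain ${\rm type}(Q(\tau^n(k)))={\rm type}({\rm GAlex}(\pi_1(M^n_k),\varphi))={\rm order}(\varphi)=n$ via Inoue's isomorphism, Proposition~\ref{prop:type_generalized}, and Proposition~\ref{prop:order_automorphism}, with the latter likewise deferred to the appendix. Your explicit verification that the type is a quandle isomorphism invariant is a small piece of bookkeeping the paper leaves implicit, but it does not change the argument.
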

\begin{proof}
It follows from Proposition~\ref{prop:type_generalized} and Proposition \ref{prop:order_automorphism} 
that we have ${\rm type}(Q(\tau^n(k))) = {\rm type}\left( {\rm GAlex}(\pi_1(M^n_k),\varphi) \right) 
= {\rm order}( \varphi ) = n$.
\end{proof}
\begin{cor}
\label{cor:type_twist_spun}
Let $k, k^{\prime}$ be non-trivial oriented $1$-knots and $n,n^{\prime}$ integers greater than $1$. 
If $Q(\tau^n(k))$ is quandle isomorphic to $Q(\tau^{n^\prime}(k^\prime))$, then we have $n=n^\prime$.
\end{cor}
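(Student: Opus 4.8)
The plan is to reduce the statement to the single observation that the type of a quandle is invariant under quandle isomorphism, and then to read off the conclusion from Theorem~\ref{theo:type_twist_spun}.

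First I would record the behaviour of the iterated operation $\ast^n$ under an isomorphism. Let $f\colon X\to Y$ be a quandle isomorphism. From the defining identity $f(x\ast y)=f(x)\ast f(y)$ we obtain $f\circ S_y=S_{f(y)}\circ f$ for every $y\in X$, and hence by induction on $n$ that $f\circ S_y^{\,n}=S_{f(y)}^{\,n}\circ f$, i.e. $f(x\ast^n y)=f(x)\ast^n f(y)$ for all $x,y\in X$ and all $n\in\Z_{>0}$.

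Next I would combine this with the bijectivity of $f$ to compare the two defining sets. If $x\ast^n y=x$ holds for all $x,y\in X$, then applying $f$ gives $f(x)\ast^n f(y)=f(x)$, and since $f$ is onto, every pair $(u,v)\in Y\times Y$ is of the form $(f(x),f(y))$, so $u\ast^n v=u$ for all $u,v\in Y$; the converse direction is identical using $f^{-1}$. Therefore the sets $\{n\in\Z_{>0}\mid x\ast^n y=x\ \text{for all }x,y\}$ attached to $X$ and to $Y$ coincide, and taking minima (with the convention $\min\emptyset=\infty$) yields $\mathrm{type}(X)=\mathrm{type}(Y)$.

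Finally I would apply this to $X=Q(\tau^n(k))$ and $Y=Q(\tau^{n'}(k'))$. Since $k,k'$ are non-trivial and $n,n'>1$, Theorem~\ref{theo:type_twist_spun} gives $\mathrm{type}(Q(\tau^n(k)))=n$ and $\mathrm{type}(Q(\tau^{n'}(k')))=n'$, so a quandle isomorphism between them forces $n=n'$. There is no serious obstacle here: once Theorem~\ref{theo:type_twist_spun} is in hand, the only content is the isomorphism-invariance of the type, and the single point to carry out carefully is the induction transporting $\ast^n$ through $f$, together with the use of surjectivity to pass the universal quantifier over pairs from $X$ to $Y$.
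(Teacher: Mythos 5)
Your proposal is correct and follows exactly the paper's argument: the paper likewise deduces $n = \mathrm{type}(Q(\tau^n(k))) = \mathrm{type}(Q(\tau^{n'}(k'))) = n'$ from Theorem~\ref{theo:type_twist_spun} together with the fact that the type is a quandle invariant. The only difference is that you spell out the (routine) verification that type is preserved under isomorphism, which the paper simply asserts.
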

\begin{proof}
Since the type of quandles is an invariant of quandles, it holds that 
$$
n = {\rm type}(Q(\tau^n(k))) = {\rm type}(Q(\tau^{n^{\prime}}(k^{\prime})))=n^{\prime}
$$
by Theorem \ref{theo:type_twist_spun} and the assumption. 
\end{proof}

\begin{proof}[Proof of Theorem \ref{theo:main}]
Let $p,q$ and $r$ be coprime integers. Gordon \cite{Gordon1972twist} showed that $G(\tau^r(t_{p,q}))$ is isomorphic to $\pi_1(M^r_{t_{p,q}})\times\Z$, where $t_{p,q}$ is the $(p,q)$-torus knot. It is known that $M^p_{t_{q,r}},M^q_{t_{r,p}}$ and $M^r_{t_{p,q}}$ are homeomorphic (\cite{Milnor1975Brieskorn}), which implies that $G(\tau^p(t_{q,r})),G(\tau^q(t_{r,p}))$ and $G(\tau^r(t_{p,q}))$ are mutually group isomorphic. Thus, putting $F_1:=\tau^p(t_{q,r}),F_2:=\tau^q(t_{r,p})$ and $F_3:=\tau^r(t_{p,q})$, we see that 2-knots $F_1,F_2$ and $F_3$ satisfy the condition (1). 
By Theorem \ref{theo:type_twist_spun}, we have ${\rm type}(Q(\tau^p(t_{q,r})))=p,\ {\rm type}(Q(\tau^q(t_{r,p})))=q$ and ${\rm type}(Q(\tau^r(t_{p,q})))=r$. This implies that the 2-knots $F_1,F_2$ and $F_3$ satisfy the condition (2). 
Varying a triple of comprime integers, we obtain infinitely many such triples of $2$-knots.  
\end{proof}

\section{Classification of twist spins whose knot quandles are finite}
\label{sec:finite}
In this section, we give a classification of twist spins whose knot quandles are finite. 
%
%
Let $k$ be an oriented $1$-knot and $n$ an integer greater than $1$. Since the knot quandle $Q(\tau^n (k))$ is isomorphic to the generalized Alexander quandle ${\rm GAlex}(\pi_1(M^n_k),\varphi)$, the knot quandle $Q(\tau^n (k))$ is finite if and only if the fundamental group $\pi_1(M^n_k)$ is finite. Thus, if the knot quandle $Q(\tau^n(k))$ is finite, $\tau^n (k)$ is 
an element of one of the following six sets (see \cite{Inoue2019fibered}):  
{\setlength{\leftmargini}{7mm} 
\begin{itemize}
\item $S_1=\{\tau^n (k)\mid n=2,k \textrm{: a non-trivial 2-bridge knot}\}$.
\item $S_2=\{\tau^n (k)\mid n=2,k \textrm{: a Montesinos knot }M((2,\beta_1),(3,\beta_2),(3,\beta_3))\}$.
\item $S_3=\{\tau^n (k)\mid n=2,k \textrm{: a Montesinos knot }M((2,\beta_1),(3,\beta_2),(5,\beta_3))\}$.
\item $S_4=\{\tau^n (k)\mid n=3,k \textrm{: the torus knot $t_{2,3}$ or the torus knot $t_{2,5}$}\}$.
\item $S_5=\{\tau^n (k)\mid n=4,k \textrm{: the torus knot $t_{2,3}$}\}$.
\item $S_6=\{\tau^n (k)\mid n=5,k \textrm{: the torus knot $t_{2,3}$}\}$.
\end{itemize}
}

\subsection{Classification as unoriented $2$-knots}
\label{subsec:finite-unori}
We classify twist spins whose knot quandles are finite as unoriented $2$-knots. 

\begin{theo}\label{theo:finite-unori}
Let $k$, $k^{\prime}$ be non-trivial oriented $1$-knots and $n$, $n^{\prime}$ integers greater than $1$. 
Suppose that $Q(\tau^n (k))$ and $Q(\tau^{n^{\prime}}(k^{\prime}))$ are finite. 
Then $\tau^n(k)$ is equivalent to $\tau^{n^{\prime}}(k^{\prime})$ as unoriented $2$-knots 
if and only if 
$n=n^{\prime}$ and $k$ is equivalent to $k^{\prime}$ as unoriented $1$-knots.
\end{theo}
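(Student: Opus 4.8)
The plan is to prove the two implications separately: the ``if'' direction is a naturality statement, while the ``only if'' direction splits into recovering the twisting number $n$ and then recovering the $1$-knot $k$.

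For the ``if'' direction, suppose $n=n'$ and $k\sim k'$ as unoriented $1$-knots, realized by a homeomorphism $g$ of $S^3$ carrying $k$ onto $k'$. Applying Zeeman's construction to $g$ yields an equivalence between $\tau^n(k)$ and a twist spin of $k'$, where the twisting sign and the orientations of $k'$ and of the spun $2$-sphere depend on whether $g$ preserves orientations. All of these ambiguities are absorbed upon passing to unoriented $2$-knots, using the naturality of the construction together with Litherland's symmetry $\tau^{-n}(k')\simeq\tau^n(-k'!)$; hence $\tau^n(k)\simeq\tau^{n'}(k')$ as unoriented $2$-knots.

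For the ``only if'' direction I would first recover $n$. By Theorem~\ref{theo:type_twist_spun} we have $\operatorname{type}(Q(\tau^n(k)))=n$, and I claim the type is an invariant of the \emph{unoriented} $2$-knot: reversing the orientation of the spun $2$-sphere replaces each symmetry $S_y$ by $S_y^{-1}$, i.e.\ replaces the quandle operation by its inverse, and the type is unchanged since $S_y^n={\rm id}$ if and only if $S_y^{-n}={\rm id}$. Therefore an unoriented equivalence $\tau^n(k)\simeq\tau^{n'}(k')$ forces $n=\operatorname{type}(Q(\tau^n(k)))=\operatorname{type}(Q(\tau^{n'}(k')))=n'$. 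It then remains, with $n=n'$, to recover $k$. Here I would use that $\tau^n(k)$ is a fibered $2$-knot whose fiber is the once-punctured $M^n_k$ and whose monodromy is the canonical generator $\varphi$ of the cyclic branched-covering action, of order $n$ by Proposition~\ref{prop:order_automorphism}; the data $(S^3,k)$ is recovered as the quotient of $M^n_k$ by $\langle\varphi\rangle$ together with its branch locus. The strategy is to promote the given unoriented equivalence of $2$-knots to an equivalence of these fibered structures, i.e.\ to an equivariant homeomorphism $M^n_k\to M^n_{k'}$ intertwining $\varphi$ and $\varphi'$, and then to pass to quotients to obtain a homeomorphism of pairs $(S^3,k)\to(S^3,k')$. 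The finiteness hypothesis is what makes this feasible: $\pi_1(M^n_k)$ is finite, so $M^n_k$ is a spherical space form, and finite cyclic actions on it are rigid by geometrization.

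The main obstacle is exactly this promotion. An equivalence of the ambient $2$-knots supplies a priori only a homeomorphism of complements, respecting the meridian and hence the infinite cyclic cover; I must upgrade it, first to a homeomorphism of the fibers $M^n_k\cong M^n_{k'}$ and then to one commuting with the monodromies. Two points need care. First, one must know the fibration of the complement is essentially unique, so that the monodromy is determined up to conjugacy; here the finite normal subgroup $\pi_1(M^n_k)$ is characteristic in $G(\tau^n(k))\cong\pi_1(M^n_k)\rtimes_\varphi\Z$ (it is the unique maximal finite normal subgroup, since any finite normal subgroup maps trivially to $\Z$), which pins down the algebraic data before geometrization converts it into a geometric, equivariant homeomorphism. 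Second, one must track the mirror and orientation-reversal ambiguities through the branched cover, which is precisely why the conclusion is stated for \emph{unoriented} $1$-knots. In this finite setting one can also make the identification concrete by running through the six families $S_1,\dots,S_6$: for fixed $n$, a homeomorphism of the cyclic branched covers determines the knot, because $2$-bridge knots, the relevant Montesinos knots, and the torus knots $t_{2,3},t_{2,5}$ are all distinguished as unoriented knots by their cyclic branched covers.
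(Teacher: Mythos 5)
Your ``if'' direction and your recovery of $n=n'$ are both fine and agree with the paper: the paper also extracts $n=n'$ from the type computation (Theorem~\ref{theo:type_twist_spun} via Corollary~\ref{cor:type_twist_spun}), and your observation that the type survives the orientation ambiguities (passing to the dual quandle does not change the type) is a correct way to handle a point the paper treats implicitly via Litherland's identities. The genuine gap is in your recovery of $k$, i.e.\ in the ``promotion'' step.

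Your plan is to let the characteristic maximal finite normal subgroup pin down ``the algebraic data'' (an isomorphism $\pi_1(M^n_k)\cong\pi_1(M^{n'}_{k'})$ intertwining the monodromy outer actions up to sign), and then to let geometrization convert this into an equivariant homeomorphism $M^n_k\to M^{n'}_{k'}$, whose quotient is the desired homeomorphism of pairs. No such conversion is possible, because the algebraic data does not determine the homeomorphism type of the fiber. A concrete counterexample sits inside the main family $S_1$: take $n=n'=2$, $k=b(7,1)=t_{2,7}$ and $k'=b(7,3)$ (the knot $5_2$). The fibers are the lens spaces $L(7,1)$ and $L(7,3)$, which have isomorphic fundamental groups $\Z/7$ but are \emph{not} homeomorphic; the deck involution acts by $-1$ on $\Z/7$ in both cases, so the knot groups are both isomorphic to $\Z/7\rtimes_{-1}\Z$ and even the knot quandles are both isomorphic to ${\rm GAlex}(\Z/7,-1)$. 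Thus the entire algebraic package you propose to feed into geometrization is the same for these two $2$-knots, yet there is no homeomorphism (equivariant or otherwise) between their fibers, and indeed $\tau^2(t_{2,7})$ and $\tau^2(b(7,3))$ are inequivalent by Plotnick's classification \cite{Plotnick1983homotopy}. So any argument that inputs only the group-theoretic data must fail. What is actually needed is the genuinely $4$-dimensional statement that a homeomorphism of the knot exteriors can be upgraded to a fiber-preserving one (equivalently, that equivalent fibered $2$-knots of this kind have homeomorphic fibers); this is not a formal consequence of the homotopy-theoretic uniqueness of the map to $S^1$ that your characteristic-subgroup argument provides --- uniqueness of fibrations up to isotopy is delicate in dimension $4$ --- and it is essentially the content of Plotnick's theorem. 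Your fallback (``run through the six families; within each, the cyclic branched covers determine the knot'') has exactly the same gap, since it presupposes a homeomorphism, not merely a $\pi_1$-isomorphism, of the branched covers.

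By contrast, the paper never attempts this promotion: after getting $n=n'$ from the type, it places both $2$-knots in the same family $S_i$ and then quotes the existing classifications --- \cite{Jang2020twist} (which rests on Plotnick's work) for $n=2$, the quandle orders $8\neq 120$ to separate the two members of $S_4$, and the fact that $k$ is forced to be $t_{2,3}$ in $S_5\cup S_6$ --- so that its only new ingredient is the type computation. If you want a self-contained proof along your lines, you would have to reprove those classification results, which requires $4$-dimensional input (e.g.\ simple-homotopy/surgery arguments on the infinite cyclic cover) well beyond $3$-dimensional geometrization.
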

\begin{proof}
It is obvious that $\tau^n(k)$ and $\tau^{n^{\prime}}(k^{\prime})$ are equivalent if $n=n^{\prime}$ and $k$ and $k^{\prime}$ are equivalent. We discuss the converse. 
Suppose that $\tau^n(k)$ and $\tau^{n^{\prime}}(k^{\prime})$, 
whose knot quandles are finite, are equivalent. 

\medskip
\underline{Case 1.} 
Consider the case where $\tau^n(k)$ is an element of $S_1\cup S_2\cup S_3$.  
By Corollary \ref{cor:type_twist_spun}, we have $n^{\prime}=n=2$. Since $Q(\tau^{n^{\prime}}(k^{\prime}))$ is finite, $\tau^{n^{\prime}}(k^{\prime})$ is also an element of $S_1\cup S_2\cup S_3$. By \cite{Jang2020twist}, we see that $k$ and $k^{\prime}$ are equivalent.

\medskip
\underline{Case 2.} 
Consider the case where $\tau^n(k)$ is an element of $S_4$.   
By Corollary \ref{cor:type_twist_spun}, we have $n^{\prime}=n=3$. Since $Q(\tau^{n^{\prime}}(k^{\prime}))$ is finite, $\tau^{n^{\prime}}(k^{\prime})$ is also an element of $S_4$, and hence $k$ and $k^{\prime}$ are equivalent to $t_{2,3}$ or $t_{2,5}$. It is known that 
\[
|Q(\tau^3 (t_{2,3}))|=|\pi_1(M^3_{t_{2,3}})|=8  \ \  \text{and} \ \  
|Q(\tau^3( t_{2,5}))|=|\pi_1(M^3_{t_{2,5}})|=120,
\] 
which implies that $\tau^3(t_{2,3})$ and $\tau^3(t_{2,5})$ are not equivalent. 
Thus, we see that $k$ and $k^{\prime}$ are equivalent.

\medskip
\underline{Case 3.} 
Consider the case where $\tau^n(k)$ is an element of $S_5\cup S_6$. 
By Corollary \ref{cor:type_twist_spun}, we have $n=n^{\prime} \in \{4,5\}$. 
Since $Q(\tau^{n^{\prime}}(k^{\prime}))$ is finite, $\tau^{n^{\prime}}(k^{\prime})$ is also an element of $S_5\cup S_6$, and hence $k$ and $k^{\prime}$ are equivalent to $t_{2,3}$. 
\end{proof}

\begin{remark}
Based on the classification of $\tau^2(t_{3,5})$, $\tau^3(t_{5,2})$ and $\tau^5(t_{2,3})$ 
by Gordon \cite[Proof of Theorem $1.1$]{Gordon1972twist} and  
that of the set $S_1$, which consists of all $2$-twist spun $2$-bridge knots,  
by Plotnick~\cite{Plotnick1983homotopy}, 
our Theorem~\ref{theo:finite-unori} has almost been proved in a series of the works by  
Kataoka~\cite{Kataoka2020twist}, Jang--Kataoka--Miyakoshi~\cite{Jang2020twist} and Miyakoshi~\cite{Miyakoshi2022twist}. 
Details are as follows.

Kataoka~\cite{Kataoka2020twist} classified the set $S_1 \cup S_2 \cup S_3$, 
which consists of all $2$-twist spun spherical Montesinos knots,  
by computing the orders of the knot quandles, 
and using dihedral group representations of the knot groups 
(and the classification by Plotnick~\cite{Plotnick1983homotopy} mentioned above). 
Then Jang--Kataoka--Miyakoshi~\cite{Jang2020twist} simplified its proof 
by using $3$-colorings rather than dihedral group representations. 
With these in mind, Miyakoshi~\cite{Miyakoshi2022twist} almost proved our Theorem~\ref{theo:finite-unori} by further computing the orders of the knot quandles for the set $S_4 \cup S_5 \cup S_6$,  
and using $3$-colorings
(and the classification by Gordon \cite{Gordon1972twist} mentioned above). 
The only remaining part was whether $\tau^2(t_{3,4}) \in S_2$ is equivalent to 
$\tau^4(t_{2,3}) \in S_5$ or not, 
where the torus knot $t_{3,4}$ is nothing but the Montesinos knot $M((2,-1),(3,1),(3,1))$. 
Note that, for both $2$-knots, 
the order of the knot quandle is $24$ and the number of $3$-colorings is $9$.  
\end{remark}

\subsection{Classification as oriented $2$-knots}
We classify twist spins whose knot quandles are finite as oriented $2$-knots. 
For an oriented knot $\mathcal{K}$, 
we denote by $-\mathcal{K}$ (or $\mathcal{K}!$, respectively) the oriented knot obtained from an oriented knot $\mathcal{K}$ by inverting the orientation (or taking the mirror image) of $\mathcal{K}$. 
An oriented knot $\mathcal{K}$ is said to be {\it invertible} (resp.~{\it $(+)$-amphicheiral}) if $\mathcal{K}$ is equivalent to $-\mathcal{K}$ (resp.~$\mathcal{K}!$). 
Litherland \cite{Litherland1985symmetries} showed that, for an oriented $1$-knot $k$ and an integer $n$, 
$\tau^n(-k)$ is equivalent to $(\tau^n(k))!$ and $\tau^n(k!)$ is equivalent to $-(\tau^n (k))$ as oriented $2$-knots. 
Hence, if the oriented $1$-knot $k$ is invertible (resp.~$(+)$-amphicheiral), 
then the oriented $2$-knot $\tau^n(k)$ is $(+)$-amphicheiral (resp.~invertible). 
Although it is not known whether or not the converse holds in general, 
we can say the following for twist spins whose knot quandles are finite:

\begin{prop}\label{prop:finite-ori}
Suppose that $Q(\tau^n (k))$ is finite for a non-trivial oriented $1$-knot $k$ and 
an integer $n$ greater than $1$. 

\begin{enumerate}
\item[$(1)$]
$k$ is invertible and $\tau^n(k)$ is $(+)$-amphicheiral. 
\item[$(2)$]
$k$ is $(+)$-amphicheiral if and only if $\tau^n(k)$ is invertible. 
\end{enumerate}
\end{prop}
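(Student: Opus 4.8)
The plan is to combine the classification of finite-quandle twist spins into the six families $S_1,\dots,S_6$ with Litherland's symmetry relations $\tau^n(-k)\simeq(\tau^n(k))!$ and $\tau^n(k!)\simeq -(\tau^n(k))$ recalled above. I will write $\simeq$ for equivalence of oriented knots.

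For part $(1)$ I would first use the observation opening this section: since $Q(\tau^n(k))$ is finite, the $1$-knot $k$ lies in one of $S_1,\dots,S_6$, so $k$ is a non-trivial $2$-bridge knot, a spherical Montesinos knot of type $(2,3,3)$ or $(2,3,5)$, or one of the torus knots $t_{2,3},t_{2,5}$. Every knot in each of these families is known to be invertible, so $k\simeq -k$. Feeding this into Litherland's first relation gives $\tau^n(k)\simeq\tau^n(-k)\simeq(\tau^n(k))!$, so $\tau^n(k)$ is $(+)$-amphicheiral, which establishes both assertions of $(1)$.

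For part $(2)$, the forward implication is already the consequence of Litherland recorded in the text: if $k\simeq k!$ then $\tau^n(k)\simeq\tau^n(k!)\simeq -(\tau^n(k))$, so $\tau^n(k)$ is invertible. The content lies in the converse. Assuming $\tau^n(k)$ is invertible, I have $\tau^n(k)\simeq -(\tau^n(k))\simeq\tau^n(k!)$ by Litherland's second relation, hence $\tau^n(k)$ and $\tau^n(k!)$ are equivalent as oriented, and a fortiori as unoriented, $2$-knots. Because reversing the orientation of a $2$-knot leaves the underlying set of its knot quandle unchanged, $Q(\tau^n(k!))$ is also finite, and $k!$ is again non-trivial; thus Theorem~\ref{theo:finite-unori} applies and yields that $k$ and $k!$ are equivalent as unoriented $1$-knots, i.e. $k\simeq k!$ or $k\simeq -(k!)$ as oriented knots. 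In the first case $k$ is $(+)$-amphicheiral outright; in the second, I invoke the invertibility $k\simeq -k$ from part $(1)$, which gives $k!\simeq(-k)!=-(k!)$ and therefore $k\simeq -(k!)\simeq k!$, so $k$ is $(+)$-amphicheiral as well.

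The step I expect to be the main obstacle is this converse in $(2)$: Theorem~\ref{theo:finite-unori} only detects unoriented equivalence of $1$-knots, so it cannot by itself distinguish $k\simeq k!$ from $k\simeq -(k!)$. The device that closes this gap is precisely the invertibility of $k$ proved in part $(1)$, which forces the two possibilities to coincide. I would also make sure to justify carefully that $k!$ remains a non-trivial knot whose twist spin has finite quandle, so that the hypotheses of Theorem~\ref{theo:finite-unori} are genuinely met.
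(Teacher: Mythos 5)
Your part (1) and the forward implication of (2) are correct and coincide with the paper's argument: the classification into $S_1,\dots,S_6$ gives invertibility of $k$ (two-bridge, spherical Montesinos, and torus knots are all invertible), and Litherland's relations convert this into $(+)$-amphicheirality of $\tau^n(k)$, respectively convert $(+)$-amphicheirality of $k$ into invertibility of $\tau^n(k)$.

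The converse of (2), however, has a genuine gap, and it sits exactly where you anticipated trouble --- but the problem is worse than the ambiguity between $k\simeq k!$ and $k\simeq -(k!)$, which your invertibility trick would indeed resolve. The fatal point is that Theorem~\ref{theo:finite-unori} cannot be read with ``equivalent as unoriented $1$-knots'' meaning ``$k\simeq k'$ or $k\simeq -k'$ as oriented knots.'' By Litherland, $\tau^n(k!)\simeq -(\tau^n(k))$ for \emph{every} $k$, and an oriented $2$-knot and its reverse have the same underlying unoriented $2$-knot; hence $\tau^n(k)$ and $\tau^n(k!)$ are \emph{always} equivalent as unoriented $2$-knots, whether or not $\tau^n(k)$ is invertible. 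Notice that your chain of implications discards the invertibility hypothesis at the very moment you pass from oriented to unoriented equivalence. Consequently, if your reading of Theorem~\ref{theo:finite-unori} were available, applying it to the pair $(k,k!)$ would show that every non-trivial $k$ with finite $Q(\tau^n(k))$ satisfies $k\simeq k!$ or $k\simeq -(k!)$, and your part-(1) argument would then make every such knot $(+)$-amphicheiral --- false for the trefoil $t_{2,3}$, which is chiral, invertible, and has $|Q(\tau^2(t_{2,3}))|=3$. So under your reading the quoted theorem is itself false, and under the reading that makes it true (unoriented equivalence of $1$-knots taken up to mirror image as well, which is forced by the Litherland relation above), its conclusion for the pair $(k,k!)$ is vacuous. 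Either way the deduction collapses. This is why the paper proves the converse of (2) by entirely different means: for $S_1$ it quotes Gordon's reversibility theorem for $2$-twist spins of $2$-bridge knots; for $S_4\cup S_5\cup S_6$ it quotes Gordon's result that twist spins of torus knots are never invertible (together with the fact that torus knots are never $(+)$-amphicheiral); and for $S_2\cup S_3$ it combines Gordon's criterion (the fiber of an invertible fibered $2$-knot admits an orientation-reversing self homotopy equivalence) with Neumann's theorem excluding such equivalences for the relevant Seifert manifolds with finite non-abelian fundamental group. Some input of this kind is unavoidable: as explained above, the unoriented classification alone can never see the difference between $k$ and $k!$.
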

\begin{proof}
We prove the case (1). 
Since $Q(\tau^n (k))$ is finite, it follows from Subsection~\ref{subsec:finite-unori} that 
the $1$-knot $k$ is a 2-bridge knot or a Montesinos knot of 
type $((2,\beta_1),(3,\beta_2),(\alpha_3,\beta_3))$, where $\alpha_3=3$ or $5$, 
which implies that $1$-knot $k$ is invertible (see \cite{Burde2014knots}). 
Hence, by \cite{Litherland1985symmetries}, the $2$-knot $\tau^n(k)$ is $(+)$-amphicheiral. 

We prove the case (2). 
Since $Q(\tau^n (k))$ is finite, the $2$-knot $\tau^n(k)$ is an element of $S_1 \cup \cdots \cup S_6$. 
Gordon \cite{Gordon2003reversibility} showed that, 
when the $2$-knot $\tau^n(k)$ is an element of $S_1$, 
the $1$-knot $k$ is $(+)$-amphicheiral if and only if the $2$-knot $\tau^n(k)$ is invertible. 
Gordon \cite{Gordon2003reversibility} also showed that 
the $n$-twist spin of any torus knot is not invertible for $n>1$. 
Since it is known that any torus knot is not $(+)$-amphicheiral, 
when the $2$-knot $\tau^n(k)$ is an element of $S_4 \cup S_5 \cup S_6$, 
the $1$-knot $k$ is not $(+)$-amphicheiral and the $2$-knot $\tau^n(k)$ is not invertible. 
Hence, it is sufficient to consider the case where the $2$-knot $\tau^n(k)$ is an element in $S_2\cup S_3$. 
In this case, we have that $n=2$ and the $1$-knot $k$ is a Montesinos knot of type $((2,\beta_1),(3,\beta_2),(\alpha_3,\beta_3))$, where $\alpha_3=3$ or $5$. 
Since it is known that the $1$-knot $k$ is not $(+)$-amphicheiral, 
all we need is to prove that the $2$-knot $\tau^2 (k)$ is not invertible. 
To prove it, we recall the following proposition due to Gordon \cite{Gordon2003reversibility}: 
\begin{prop}
\label{prop:fiber_ori_reversing}
Let $M$ be a closed,connected, orientable 3-manifold and $F$ a fibered 2-knot whose fiber is the once punctured $M$. If $F$ is invertible, then there exists an orientation reversing self homotopy equivalent of $M$.
\end{prop}
The fiber $M^2_k$ of the $2$-knot $\tau^2 (k)$ is known to have the finite non-abelian fundamental group. 
It follows from the proof of Theorem 8.2 of \cite{Neumann1978Seifert} that 
the fiber $M^2_k$ does not have an orientation reversing self homotopy equivalent. 
Thus, by Proposition~\ref{prop:fiber_ori_reversing}, we see that the $2$-knot $\tau^2(k)$ is not invertible.
\end{proof}

Combining Theorem~\ref{theo:finite-unori} with Proposition~\ref{prop:finite-ori}, we have the following: 

\begin{theo}
Let $k$, $k^{\prime}$ be non-trivial oriented $1$-knots and $n$, $n^{\prime}$ integers greater than $1$. Suppose that $Q(\tau^n (k))$ and $Q(\tau^{n^{\prime}}(k^{\prime}))$ are finite. Then $\tau^n(k)$ is equivalent to $\tau^{n^{\prime}}(k^{\prime})$ as oriented $2$-knot if and only if $n=n^{\prime}$ and $k$ is equivalent to $k^{\prime}$ as oriented $1$-knot.
\end{theo}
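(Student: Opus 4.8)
The ``if'' direction is immediate: if $n = n'$ and $k$ is equivalent to $k'$ as oriented $1$-knots, then the naturality of Zeeman's twist-spinning construction produces an orientation-preserving equivalence between $\tau^n(k)$ and $\tau^{n'}(k')$ as oriented $2$-knots. So the content lies in the ``only if'' direction, which I would prove by reducing to the unoriented classification of Theorem~\ref{theo:finite-unori} and then removing the orientation-reversal and mirror ambiguities one at a time, using Proposition~\ref{prop:finite-ori} together with Litherland's formulas.

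Assume $\tau^n(k)$ and $\tau^{n'}(k')$ are equivalent as oriented $2$-knots. Forgetting the orientations of the $2$-spheres, they are in particular equivalent as unoriented $2$-knots, so Theorem~\ref{theo:finite-unori} yields $n = n'$ together with an equivalence of $k$ and $k'$ as unoriented $1$-knots. Hence, as an oriented $1$-knot, $k'$ is equivalent to one of the four symmetry variants $k$, $-k$, $k!$, $-(k!)$, and the goal is to show that in fact $k'$ is equivalent to $k$.

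Since $Q(\tau^n(k))$ is finite, Proposition~\ref{prop:finite-ori}$(1)$ guarantees that $k$ is invertible, i.e.\ $-k$ is equivalent to $k$; applying the mirror operation, $-(k!)$ is then equivalent to $k!$ as well. This collapses the four candidates to two, so $k'$ is equivalent to $k$ or to $k!$. If $k$ is $(+)$-amphicheiral, then $k!$ is equivalent to $k$ and both cases give the desired conclusion. If $k$ is not $(+)$-amphicheiral, I would exclude the possibility that $k'$ is equivalent to $k!$ by contradiction: in that case Litherland's relation together with $n = n'$ would give
\[
\tau^n(k) \;\simeq\; \tau^{n'}(k') \;\simeq\; \tau^n(k!) \;\simeq\; -\bigl(\tau^n(k)\bigr),
\]
so that $\tau^n(k)$ would be invertible. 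But by Proposition~\ref{prop:finite-ori}$(2)$ the invertibility of $\tau^n(k)$ is equivalent to the $(+)$-amphicheirality of $k$, contradicting our assumption. Therefore $k'$ is equivalent to $k$ as oriented $1$-knots in every case, which completes the argument.

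The step I expect to require the most care is the symmetry bookkeeping in the last paragraph: one must remove the orientation-reversal ambiguity first (via invertibility of $k$, Proposition~\ref{prop:finite-ori}$(1)$) and only afterwards eliminate the mirror ambiguity (via the amphicheiral/invertible correspondence of Proposition~\ref{prop:finite-ori}$(2)$ fed through Litherland's orientation-reversing formula $\tau^n(k!) \simeq -(\tau^n(k))$). Everything else reduces to a direct appeal to the already-established unoriented classification and to the naturality of the twist-spinning construction.
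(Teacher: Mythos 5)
Your proof is correct and takes essentially the same route as the paper: the paper's own ``proof'' of this theorem is just the remark that it follows by combining Theorem~\ref{theo:finite-unori} with Proposition~\ref{prop:finite-ori}, and your argument is precisely that combination carried out explicitly (using Litherland's formula $\tau^n(k!)\simeq -(\tau^n(k))$ to rule out the mirror case). The symmetry bookkeeping you spell out --- first collapsing $-k$ and $-(k!)$ via invertibility from Proposition~\ref{prop:finite-ori}$(1)$, then excluding $k!$ via Proposition~\ref{prop:finite-ori}$(2)$ --- is exactly what the paper leaves implicit.
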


\appendix
\section{Proof of Proposition \ref{prop:order_automorphism}}
\label{appendix:proof}
In this appendix, we will show Proposition \ref{prop:order_automorphism}. 
For a non-trivial oriented $1$-knot $k$ in $S^3$ and an integer $n$ greater than 1, 
let $M^n_k$ be the $n$-fold cyclic branched covering space of $S^3$ branched along $k$, 
and $\varphi$ the group automorphism of $\pi_1(M^n_k)$ induced 
by the monodromy of the complement $S^4\backslash\tau^n(k)$ of the fibered $2$-knot $\tau^n(k)$. 
Then, by~\cite[Proposition A.11.]{Conner1972manifolds} and~\cite[Lemma $2.3$]{Plotnick1983homotopy}, 
we have the following: 

\begin{lemm}
\label{lemm:order_group_auto}
If the universal covering space of $M^n_k$ is homeomorphic to $\R^3$, 
then the order of $\varphi$ is equals to $n$. 
\end{lemm}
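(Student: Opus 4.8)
The plan is to realize $\varphi$ geometrically and then play the periodicity of the monodromy against the asphericity forced by the hypothesis. By the identification already recorded in the main text (which is the content of \cite[Lemma~2.3]{Plotnick1983homotopy}), the automorphism $\varphi$ is induced by the canonical generator $h$ of the $\Z/n\Z$ deck-transformation group of the $n$-fold cyclic branched covering $M^n_k \to S^3$. This $h$ is a periodic homeomorphism of order exactly $n$, and it fixes the branch locus $\tilde{k}\subset M^n_k$ pointwise; choosing a base point $x_0\in\tilde{k}$, the automorphism $\varphi = h_\ast$ is an honest element of $\mathrm{Aut}(\pi_1(M^n_k,x_0))$ with $\varphi^n = (h^n)_\ast = \mathrm{id}$. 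Hence $\mathrm{order}(\varphi)$ divides $n$, and the whole problem reduces to the reverse divisibility, i.e.\ that no proper power of $h$ acts as the identity on $\pi_1$.

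The hypothesis then enters through asphericity. Suppose, for contradiction, that $d := \mathrm{order}(\varphi) < n$; since $\varphi^n = \mathrm{id}$ we have $d \mid n$, so $h^d$ is a periodic homeomorphism of order $n/d > 1$ that fixes $x_0$ and satisfies $(h^d)_\ast = \varphi^d = \mathrm{id}$. Because the universal cover of $M^n_k$ is homeomorphic to $\R^3$, the manifold $M^n_k$ is aspherical, a $K(\pi,1)$; for such a target a based self-map is determined up to based homotopy by its effect on $\pi_1$, so $h^d$ is homotopic to $\mathrm{id}_{M^n_k}$. This is exactly the situation excluded by \cite[Proposition~A.11]{Conner1972manifolds}: a nontrivial periodic homeomorphism of a closed aspherical manifold with Euclidean-space universal cover cannot be homotopic to the identity. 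The contradiction forces $\mathrm{order}(\varphi) = n$.

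The main obstacle I anticipate is the bookkeeping between $\mathrm{Aut}(\pi_1)$ and $\mathrm{Out}(\pi_1)$, together with the fact that the $\Z/n\Z$-action is not free but fixes the branch knot $\tilde{k}$. The Conner--Raymond statement is most naturally phrased up to conjugation, i.e.\ in $\mathrm{Out}$, whereas the quantity I control is the honest order in $\mathrm{Aut}$; reconciling the two relies on the base point lying on the fixed-point set $\tilde{k}$, so that $h^d$ is genuinely base-point preserving and the conclusion ``homotopic to the identity'' can be upgraded from free to based homotopy. Handling the possible center of $\pi_1(M^n_k)$, which is present for the Seifert-fibered $M^n_k$ occurring here, in this passage is the delicate point, and it is precisely what the combination of \cite[Lemma~2.3]{Plotnick1983homotopy} with \cite[Proposition~A.11]{Conner1972manifolds} is designed to take care of.
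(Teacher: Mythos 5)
Your proposal follows the same route the paper intends: the paper offers no argument for this lemma beyond citing \cite[Lemma 2.3]{Plotnick1983homotopy} and \cite[Proposition A.11]{Conner1972manifolds}, and your two steps (realizing $\varphi$ as $h_{\ast}$ for the canonical deck transformation $h$, with base point on the pointwise-fixed branch locus, so that $\varphi^{n}=\mathrm{id}$; then ruling out $\mathrm{order}(\varphi)=d<n$ by applying Conner--Raymond to the nontrivial periodic map $h^{d}$) are exactly how those two citations are meant to combine. One correction is needed, however, because the sentence you attribute to Proposition A.11 is false as stated: a nontrivial periodic homeomorphism of a closed aspherical manifold with universal cover $\R^{3}$ \emph{can} be freely homotopic to the identity, and this happens precisely for the manifolds relevant here. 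When $k$ is a torus knot, $M^{n}_{k}$ is a Seifert fibered space admitting an effective circle action, and every finite-order element of that action is a nontrivial periodic homeomorphism isotopic to the identity; some of these even have nonempty fixed sets, namely exceptional fibers. What is true --- and what the fixed point buys you --- is the based statement: a nontrivial finite-order homeomorphism fixing $x_{0}$ cannot induce the identity on $\pi_{1}(M^{n}_{k},x_{0})$. The circle-action examples are consistent with this, because the free homotopy to the identity drags the base point once around an exceptional fiber, so the induced automorphism is conjugation by that fiber class, a nontrivial inner automorphism. For the same reason, your closing paragraph has the logic backwards: Conner--Raymond is not ``naturally an Out statement'' in need of an upgrade; the Out-type statement is simply false when $\pi_{1}$ has nontrivial center, and Proposition A.11 is precisely the Aut-with-fixed-point statement. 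Since your argument does supply the fixed base point $x_{0}\in\widetilde{k}$ and the identity $(h^{d})_{\ast}=\mathrm{id}$ in $\mathrm{Aut}(\pi_{1}(M^{n}_{k},x_{0}))$, the proof is complete once A.11 is invoked in this based form rather than in the free-homotopy form you quoted.
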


Using the lemma above, we give a proof of Proposition \ref{prop:order_automorphism}, 
in which we call a $1$-knot in $S^3$ as a knot for simplicity. 

\begin{proof}[Proof of Proposition \ref{prop:order_automorphism}]
If ${\rm order}(\varphi)=1$, the quandle ${\rm GAlex}(\pi_1(M^n_k),\varphi)$ is trivial, that is, it holds that $x\ast y=x$ for any $x,y\in \pi_1(M^n_k)$. 
Since ${\rm GAlex}(\pi_1(M^n_k),\varphi)$ is isomorphic to the knot quandle $Q(\tau^n(k))$, the quandle ${\rm GAlex}(\pi_1(M^n_k),\varphi)$ is connected. This implies that the cardinality of ${\rm GAlex}(\pi_1(M^n_k),\varphi)$ is $1$, that is, 
$\pi_1(M^n_k)$ is trivial. 
By the Smith conjecture, the knot $k$ must be the unknot. Hence, we have ${\rm order}(\varphi) \neq 1$. 

Since $\varphi$ is induced by the canonical generator of the transformation group of $M^n_k$, we see that $\varphi^n$ is the identity map, and hence $n$ is divisible by ${\rm order}(\varphi)$. 
Then, if $n$ is a prime number, it follows from ${\rm order}(\varphi) \neq 1$ that 
${\rm order}(\varphi)=n$. Hence, we assume that $n$ is a composite number.

If $k$ is the composite knot of $k_1$ and $k_2$, then the fundamental group $\pi_1(M^n_k)$ is the free product of $\pi_1(M^n_{k_1})$ and $\pi_1(M^n_{k_2})$ (see \cite[Theorem 2]{Sakuma1982regular}). For $i=1,2$, let $\varphi_i$ be the group automorphism induced by the canonical generator of the transformation group of $M^n_{k_i}$, and $\eta_i$ the injective group homomorphism from $\pi_1(M^n_{k_i})$ to $\pi_1(M^n_k)$. It holds that the restriction $\varphi|_{\pi_1(M^n_{k_i})}$ coincides with the group homomorphism $\eta_i\circ\varphi_i:\pi_1(M^n_{k_i})\to \pi_1(M^n_k)$ for $i\in\{1,2\}$. Thus, if ${\rm order}(\varphi_1)={\rm order}(\varphi_2)=n$, we see that ${\rm order}(\varphi)=n$. 
Hence it is sufficient to consider the case where $k$ is a prime knot, 
that is, $k$ is one of a torus knot, a hyperbolic knot or a satellite knot. 
We note that, by the equivalent sphere theorem \cite{Meeks1980topology}, 
the branched covering space $M^n_k$ of the prime knot $k$ 
is irreducible (see also \cite[Theorem 2]{Sakuma1982regular}), 
which will be used in Case $1$ and Case $3$ below.  

\medskip
\underline{Case 1.} Consider the case where $k$ is a torus knot. 
Since $S^3\backslash k$ is a Seifert fibered space, 
$M^n_k$ is also a Seifert fibered space. 
Thus, the universal covering space of $M^n_k$ is either $S^3,S^2\times\R$ or $\R^3$ 
(see \cite[Lemma 3.1]{Scott1983geometries}). 
{\setlength{\leftmargini}{5mm}  
\begin{itemize}
\item Suppose that the universal covering space of $M^n_k$ is $S^2\times\R$. Then $\pi_2(M^n_k)$ is non-trivial. By the sphere theorem \cite{Papakyriakopoulos1957Dehn}, it holds that $M^n_k$ is not irreducible. This is a contradiction.
\item  Suppose that universal covering space of $M^n_k$ is $S^3$. 
Since $n$ is a composite number, we see that $k = t_{2,3}$ and $n=4$. 
Then we have ${\rm order}(\varphi) \mid 4$. 
Moreover, it follows from ${\rm order}(\varphi) \neq 1$ that ${\rm order}(\varphi) = 2$ or $4$. 
It is known that $Q( \tau^4(t_{2,3}) )$ 
has the following presentation (cf. \cite{Satoh2002surface}):
\[
\langle a,b\mid (a\ast b)\ast a=b, a\ast^4 b=a\rangle. 
\]
If ${\rm order}(\varphi)=2$, then ${\rm type}(Q(\tau^4( t_{2,3})))=2$ 
by Proposition~\ref{prop:type_generalized}. 
Hence, the following is also a presentation of $Q(\tau^4 t_{2,3})$:
\[
\langle a,b\mid (a\ast b)\ast a=b, a\ast^4 b=a, 
a\ast^2 b=a, b\ast^2 a=b \rangle.
\]
Using Tietze's moves, we see that $Q(\tau^4 (t_{2,3}))$ has the following presentation:
\[
\langle a,b\mid (a\ast b)\ast a=b, a\ast^2 b=a\rangle.
\]
This implies that $Q(\tau^4 (t_{2,3}))$ is quandle isomorphic to $Q(\tau^2 (t_{2,3}))$. On the other hand, by Inoue's result, we have 
\[
|Q(\tau^4 (t_{2,3}))|=|\pi_1(M^4_{t_{2,3}})|=24 \ \ \text{and} \ \ 
|Q(\tau^2 (t_{2,3}))|=|\pi_1(M^2_{t_{2,3}})|=3. 
\]
This is a contradiction. 
Hence we have ${\rm order}(\varphi) = 4$. 

\item Suppose that the universal covering space of $M^n_k$ is $\R^3$. 
By Lemma \ref{lemm:order_group_auto}, we have ${\rm order}(\varphi)=n$.
\end{itemize}
}	

\medskip
\underline{Case 2.} Consider the case where $k$ is a hyperbolic knot. 
Since $n$ is a composite number, $n$ is greater than $3$. 
As a consequence of the orbifold theorem, it holds that $M^n_k$ is a hyperbolic manifold (see \cite{Boileau2001geometrization,Cooper2000Three}). Thus, the universal covering space of $M^n_k$ is $\R^3$. 
By Lemma \ref{lemm:order_group_auto}, we have ${\rm order}(\varphi)=n$.

\medskip
\underline{Case 3.} Consider the case where $k$ is a satellite knot. 
In this case, the complement $E(k)$ of $k$ contains an incompressible torus. 
Then, by \cite{Gordon1984incompressible}, we have 
either that 
$M^n_k$ is sufficiently large, 
or that $S^3$ and $M^n_k$ contain a non-separating 2-sphere. 
Since $S^3$ does not contain non-separating 2-sphere, we see that $M^n_k$ is sufficiently large. 
Recall that 
$M^n_k$ is also irreducible as metioned above. 
Since it is shown in  \cite{Waldhausen1968irreducible} that 
the universal covering space of the interior of a sufficiently large irreducible 3-manifold is $\R^3$, 
the universal covering space of $M^n_k$ is $\R^3$. 
By Lemma \ref{lemm:order_group_auto}, we have ${\rm order}(\varphi)=n$.
\end{proof}

\section{Branched twist spin}
\label{appendix:BTS}

In this appendix, we compute the types of the knot quandles for a certain class of $2$-knots called branched twist spins including all twist spins. 
We also give an alternative proof of \cite[Theorem 1.1]{FukudaRepresentations} quoted below as Theorem~\ref{theo:Fukuda}, 
which followed from \cite[Theorem 4.1]{FukudaRepresentations} on knot group representations for branched twist spins. 

A $2$-knot that is invariant under a circle action on the $4$-sphere $S^4$ 
is called a \textit{branched twist spin}. 
Let $\tau^{n,s}(k)$ be the branched twist spin obtained from an oriented $1$-knot $k$ and 
a pair of coprime positive integers $n$ and $s$ with $n>1$. 
Since the branched twist spin $\tau^{n,s}(k)$ is 
the branch set of 
the $s$-fold cyclic branched covering space of $S^4$ branched along the twist spin $\tau^n(k)$, 
it is a fibered $2$-knot whose fiber is the once punctured 
$n$-fold cyclic branched covering space of $S^3$ branched along $k$ 
and whose monodromy is the $s$-times composite of the canonical generator of the transformation group of $M_k^n$. 
Note that $\tau^{n,1}(k)$ is nothing but $\tau^n(k)$. 
By \cite[Theorem~$3.1$]{Inoue2019fibered},  
the knot quandle $Q(\tau^{n,s}(k))$ is quandle isomorphic to the generalized Alexander quandle 
${\rm GAlex}(\pi_1(M^n_k),\varphi^s)$, where $\varphi$ is the group automorphism of $\pi_1(M^n_k)$ 
as before. Then, for a non-trivial oriented $1$-knot $k$, we have the following:   

\begin{theo}
\label{theo:type_BTS}
The type of $Q(\tau^{n,s}(k))$ is equal to $n$.
\end{theo}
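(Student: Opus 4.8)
The plan is to reduce the computation of ${\rm type}(Q(\tau^{n,s}(k)))$ to an elementary order computation in the automorphism group of $\pi_1(M^n_k)$, exactly as was done for ordinary twist spins in Theorem~\ref{theo:type_twist_spun}. By the result of Inoue quoted just above the statement, the knot quandle $Q(\tau^{n,s}(k))$ is quandle isomorphic to the generalized Alexander quandle ${\rm GAlex}(\pi_1(M^n_k),\varphi^s)$, and the type is a quandle invariant; hence it suffices to compute ${\rm type}({\rm GAlex}(\pi_1(M^n_k),\varphi^s))$.

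First I would apply Proposition~\ref{prop:type_generalized} to the generalized Alexander quandle determined by the automorphism $\varphi^s$. This gives at once
\[
{\rm type}({\rm GAlex}(\pi_1(M^n_k),\varphi^s)) = {\rm order}(\varphi^s).
\]
Thus the whole theorem comes down to computing the order of the power $\varphi^s$.

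Next I would carry out that order computation. By Proposition~\ref{prop:order_automorphism} we have ${\rm order}(\varphi)=n$. Invoking the standard fact that for an element of finite order $n$ in a group one has ${\rm order}(\varphi^s)=n/\gcd(n,s)$, together with the hypothesis that $n$ and $s$ are coprime (so that $\gcd(n,s)=1$), I conclude ${\rm order}(\varphi^s)=n$. Chaining the equalities then yields ${\rm type}(Q(\tau^{n,s}(k)))=n$, as desired.

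The main point — indeed essentially the only one — is the coprimality assumption on $n$ and $s$: it is precisely what guarantees that passing from $\varphi$ to its $s$-th power does not collapse the order. All the geometric and topological content has already been absorbed into Proposition~\ref{prop:order_automorphism}, so no further work on the covering spaces $M^n_k$ is needed here, and the argument is a short chain of equalities.
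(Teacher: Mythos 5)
Your proposal is correct and follows exactly the paper's own argument: apply Inoue's isomorphism with ${\rm GAlex}(\pi_1(M^n_k),\varphi^s)$, compute the type via Proposition~\ref{prop:type_generalized}, and use Proposition~\ref{prop:order_automorphism} together with coprimeness of $n$ and $s$ to get ${\rm order}(\varphi^s)={\rm order}(\varphi)=n$. The only difference is that you spell out the elementary formula ${\rm order}(\varphi^s)=n/\gcd(n,s)$, which the paper leaves implicit.
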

\begin{proof}
It follows from Proposition \ref{prop:order_automorphism} and the coprimeness of $n$ and $s$ 
that ${\rm order}(\varphi^s)= {\rm order}(\varphi)=n$. 
Hence, by Proposition~\ref{prop:type_generalized}, the type of $Q(\tau^{n,s}(k))$ is equal to $n$.
\end{proof}

Fukuda \cite{FukudaRepresentations} studied dihedral group representations of the knot group 
and showed the following as its application:

\begin{theo}[{\cite[Theorem 1.1]{FukudaRepresentations}}]
\label{theo:Fukuda}
Let $k_1$ and $k_2$ be non-trivial oriented $1$-knots, and $\tau^{n_1,s_1}(k_1)$ and $\tau^{n_2,s_2}(k_2)$ be branched twist spins. If $n_1$ and $n_2$ are different, then $\tau^{n_1,s_1}(k_1)$ and $\tau^{n_2,s_2}(k_2)$ are not equivalent.
\end{theo}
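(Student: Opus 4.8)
The plan is to obtain this as an immediate consequence of Theorem~\ref{theo:type_BTS}, in exactly the same spirit as the deduction of Corollary~\ref{cor:type_twist_spun} from Theorem~\ref{theo:type_twist_spun}. The key observation is that $\mathrm{type}(Q(\mathcal{K}))$ is an invariant of the $2$-knot $\mathcal{K}$: the knot quandle $Q(\mathcal{K})$ is an invariant of $\mathcal{K}$, and the type is preserved under quandle isomorphism, so the type of the knot quandle is determined by the equivalence class of $\mathcal{K}$.

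First I would argue by contradiction. Suppose $\tau^{n_1,s_1}(k_1)$ and $\tau^{n_2,s_2}(k_2)$ are equivalent. Then $Q(\tau^{n_1,s_1}(k_1))$ and $Q(\tau^{n_2,s_2}(k_2))$ are quandle isomorphic, so in particular they have the same type. By Theorem~\ref{theo:type_BTS}, these types are $n_1$ and $n_2$ respectively, whence $n_1=n_2$, contradicting the hypothesis that $n_1$ and $n_2$ are different. Note that the integers $s_1,s_2$ and the $1$-knots $k_1,k_2$ play no role whatsoever, which is exactly why the conclusion is so clean.

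The one point that warrants care is the role of orientation, since $Q(\mathcal{K})$ is defined using the orientation of $\mathcal{K}$. If equivalence is understood for oriented $2$-knots, the quandles are isomorphic on the nose and nothing further is needed. If reversal of orientation is allowed, one checks directly from the definition of the quandle operation that reversing the orientation of $\mathcal{K}$ traverses each meridian $m_{\beta(0)}$ in the opposite direction, and hence replaces every right translation $S_y$ by its inverse $S_y^{-1}$; since $S_y^{\,m}=\mathrm{id}$ if and only if $S_y^{-m}=\mathrm{id}$, the type is unchanged. Thus $\mathrm{type}(Q(\mathcal{K}))$ is in fact an invariant of the unoriented $2$-knot as well, and the contradiction above persists. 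I do not expect any genuine obstacle here: the entire substance of the statement is already packaged in Theorem~\ref{theo:type_BTS}, and this proof merely reads the type off as a knot invariant, so the orientation bookkeeping is the only (minor) thing to get right.
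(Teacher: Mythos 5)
Your proposal is correct and follows essentially the same route as the paper: the paper also deduces the statement immediately from Theorem~\ref{theo:type_BTS}, arguing by contraposition that equivalent branched twist spins have isomorphic knot quandles and hence equal types, forcing $n_1=n_2$. Your additional check that the type is insensitive to orientation reversal is a sound (if optional) refinement that the paper does not spell out.
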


Here we give an alternative proof of his theorem 
by using  the knot quandle rather than the knot group. 

\begin{proof}[Proof of Theorem~$\ref{theo:Fukuda}$]
We prove the contraposition. 
If two branched twist spins $\tau^{n_1,s_1}(k_1)$ and $\tau^{n_2,s_2}(k_2)$ are equivalent, then 
the two knot quandles $Q(\tau^{n_1,s_1}(k_1))$ and $Q(\tau^{n_2,s_2}(k_2))$ are quandle isomorphic, and 
hence we have 
$$n_1 = \mathop{\textrm{type}}( Q(\tau^{n_1,s_1}(k_1)) ) =  
\mathop{\textrm{type}}( Q(\tau^{n_2,s_2}(k_2)) ) = n_2 $$
by Theorem~\ref{theo:type_BTS}. 
\end{proof}

\section*{Acknowledgement}
The authors would like to thank 
Seiichi Kamada and Makoto Sakuma 
for their helpful comments. 
The first author was supported by JSPS KAKENHI Grant Number 17K05242 and 21K03220. 
The second author was supported by JSPS KAKENHI Grant Number 21J21482.  
 
\bibliographystyle{plain}
\bibliography{reference}
\end{document}